\newcommand{\al}[1]{\left\langle #1 \right\rangle}
 \newcommand{\p}{\varphi}
 \newcommand{\D}{\nabla}
 \newcommand{\R}{\mathbb{R}}
\def\W{\mathcal{W}}
\newtheorem{theorem}{Theorem}[section]
\newtheorem{lemma}[theorem]{Lemma}
\newtheorem{proposition}[theorem]{Proposition}
\newtheorem{remark}[theorem]{Remark}
\DeclareMathOperator{\sym}{sym}
\DeclareMathOperator{\cof}{cof}
\begin{document}
\title{Another remark on constrained von K\'arm\'an theories}

\date{}

\author{
{\sc Peter Hornung}\footnote{
Technische Universit\"at Dresden (Germany).
}
}

\maketitle

\section{Introduction}\label{Sec0}

This note is an appendix to \cite{H-PRSE}.
Let $B\subset\R^2$ denote the open unit disk, let $U_0\in W^{2,2}(B)$, set 
$k = \det\D^2 U_0$ and consider the functional 
$\W_k : W^{2,2}(B)\to [0, \infty]$
given by
$$
\W_k(V) = 
\begin{cases}
\int_B |\D^2 V(x)|^2\ dx &\mbox{ if }\det\D^2 V = k\mbox{ a.e. on }B,
\\
+ \infty &\mbox{ otherwise.}
\end{cases}
$$
The functionals $\W_k$ are scalar variants of the functionals studied
in \cite{H-general}. They are the natural generalisations of the
constrained von K\'arm\'an functionals derived in \cite{fjm2} to the case
when the reference configuration is `prestrained', or to shallow shells,
cf. e.g. in \cite{GemmerVenka}.
In \cite{H-PRSE} the Euler-Lagrange equation
for the functionals $\W_k$ was derived, and it was rigorously justified 
in the elliptic case $k > 0$.
\\
As in \cite{H-PRSE} we say that a function $V\in W^{2,2}(B)$
with $\det\D^2 V = k$ is stationary for $\W_k$ if 
$$
\frac{d}{dt}\Big|_{t = 0} \int_B |\D^2 u(t)|^2 = 0
$$
for all strongly $W^{2,2}$-continuous
maps $t\mapsto u(t)$ from a neighbourhood of zero in $\R$ into $\{\W_k < \infty\}$
such that $u(0) = V$ and such that the derivative $u'(0)$ exists (at least weakly in $W^{2,2}$).
A function $V\in W^{2,2}(B)$ with $\det\D^2 V = k$ is said to be formally stationary for $\W_k$
if 
$$
\int_B \D^2 V : \D^2 f = 0 \mbox{ for all }f\in W^{2,2}(B)\mbox{ with }\cof\D^2 V : \D^2 f = 0\mbox{ a.e. in }B.
$$
Clearly, formally stationary functions are stationary.
\\
In this note we are interested in radially symmetric functions.
Theorem 1.7 in \cite{LeOcPa} asserts that if $V\in W^{2,2}(B)$ is radially symmetric
with $k := \det\D^2 V \geq c$ for some positive constant $c$ and $k\in C^{\infty}(\overline B)$, 
then $V$ is a stationary point
of $\W_k$ in the sense of \cite{H-PRSE}. The approach in \cite{LeOcPa}
is based on the pointwise Euler-Lagrange equation from \cite{H-PRSE}.
\\
Here we obtain the same conclusion without any hypotheses on $k$. 
The proof is very simple and robust, and it makes no explicit
use of the (pointwise) Euler-Lagrange equation from \cite{H-PRSE},
but of course it also uses the concept of stationarity introduced there.
\begin{proposition}\label{main} 
Let $V\in W^{2,2}(B)$ be radially symmetric and let $k = \det\D^2 V$. Then $V$ 
is a stationary point of $\W_k$.
\end{proposition}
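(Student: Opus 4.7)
\medskip

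\textbf{Plan of proof.} The strategy is to show that $V$ is \emph{formally} stationary in the sense defined in the excerpt; since formally stationary functions are stationary, this suffices. So fix an arbitrary $f\in W^{2,2}(B)$ with $\cof\D^2 V : \D^2 f = 0$ a.e.\ on $B$, and aim to prove that $\int_B \D^2 V : \D^2 f = 0$.

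I would work in polar coordinates. Writing $V(x) = v(|x|)$, the Hessian $\D^2 V$ is diagonal in the orthonormal polar frame with entries $v''(r)$ and $v'(r)/r$, while $\cof\D^2 V$ is diagonal with the two entries swapped. Hence both $\D^2 V : \D^2 f$ and $\cof\D^2 V : \D^2 f$ depend on $\D^2 f$ only through its diagonal polar components $\partial_{rr}f$ and $r^{-1}\partial_r f + r^{-2}\partial_{\theta\theta}f$. Letting $\bar f(r)=\frac1{2\pi}\int_0^{2\pi}f(r,\theta)\,d\theta$ denote the angular average, the $\partial_{\theta\theta}f$ contribution disappears after integration (or averaging) in $\theta$. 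This gives on the one hand
\[
\int_B \D^2 V : \D^2 f\,dx
\;=\; 2\pi\int_0^1 \Bigl(v''(r)\,\bar f''(r)\,r + \frac{v'(r)\,\bar f'(r)}{r}\Bigr)\,dr,
\]
and, on the other hand, averaging the pointwise constraint $\cof\D^2 V : \D^2 f = 0$ in $\theta$ yields the 1D constraint
\[
v'(r)\,\bar f''(r) + v''(r)\,\bar f'(r) = 0 \quad\text{for a.e.\ }r\in(0,1),
\]
i.e.\ $(v'\bar f')' = 0$, so $v'(r)\bar f'(r) = c$ a.e.\ for some constant $c$.

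To identify $c$, I would use that the radial $W^{2,2}$ regularity of $V$ (and of $\bar f$, which inherits this via Fubini) gives $\int_0^1(v')^2/r\,dr<\infty$ and $\int_0^1(\bar f')^2/r\,dr<\infty$. Cauchy--Schwarz then yields $v'\bar f'/r\in L^1(0,1)$. Since $1/r$ is not in $L^1$ near $0$, this forces $c=0$, so in fact $v'\bar f' = 0$ a.e.\ on $(0,1)$.

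The final step uses the Stampacchia-type lemma: for a scalar $W^{1,1}_{\mathrm{loc}}$ function, its weak derivative vanishes a.e.\ on its zero set. Applied to $v'$ and to $\bar f'$ (both of which lie in $W^{1,2}_{\mathrm{loc}}(0,1)$), this gives $v''=0$ a.e.\ on $\{v'=0\}$ and $\bar f''=0$ a.e.\ on $\{\bar f'=0\}$. Since $v'\bar f'=0$ a.e.\ means a.e.\ point lies in $\{v'=0\}\cup\{\bar f'=0\}$, the product $v''\bar f''$ vanishes a.e., and both terms of the above 1D integral are zero. The main technical point to be careful about is the justification of the angular averaging near $r=0$ together with the integrability argument that forces $c=0$; once those are in place, the rest is essentially algebraic.
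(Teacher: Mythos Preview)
Your proof is correct and follows essentially the same route as the paper: reduce to radially symmetric test functions by angular averaging (the paper's Lemma~2.1, which it phrases via rotations $\rho_\varphi$ rather than polar coordinates), then analyse the resulting one-dimensional constraint $(v'\bar f')'=0$ and use the Stampacchia property to conclude (the paper's Lemma~2.2). The only cosmetic difference is that the paper identifies the constant as zero by invoking $v'(0)=0$ rather than your Cauchy--Schwarz integrability argument, but both rest on the same fact that $\int_0^1 (v')^2/r\,dr<\infty$.
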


{\bf Remarks.}
\begin{enumerate}[(i)]
\item In fact we obtain that $V$ is even formally stationary.
\item In the convex case $k \geq c > 0$
the equation $\det\D^2 V = k$ admits at most one radially symmetric solution,
cf. \cite[Corollary 6.2]{LeOcPa}.
\\
In contrast to that, in Section \ref{Example} we give examples of
$k\in C^{\infty}(\overline B)$ such that there exist infinitely many radially symmetric solutions 
to $\det\D^2 V = k$. By Proposition \ref{main} each of them is stationary for $\W_k$.
\item By Remark \ref{same} below all radially symmetric solutions $V\in W^{2,2}(B)$ 
of $\det\D^2 V = k$ have the same energy $\W_k(V)$.
\end{enumerate}

\section{Proof of Proposition \ref{main}}

\begin{lemma}\label{vks} 
Let $V\in W^{2,2}(B)$ be radially symmetric and assume that
$\int_S \D^2 V : \D^2 f = 0$
for all radially symmetric $f\in W^{2,2}(B)$ with $\cof\D^2 V : \D^2 f = 0$.
Then in fact $\int_B \D^2 V : \D^2 f = 0$ for {\em all} 
$f\in W^{2,2}(B)$ with $\cof\D^2 V : \D^2 f = 0.$
\end{lemma}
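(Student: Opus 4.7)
The plan is to reduce to the hypothesis by a Fourier-type decomposition in the angular variable. Given $f\in W^{2,2}(B)$ with $\cof\D^2 V : \D^2 f = 0$ a.e., I will split $f = f_{\mathrm{rad}} + g$, where $f_{\mathrm{rad}}(r) := \frac{1}{2\pi}\int_0^{2\pi} f(r,\theta)\,d\theta$ is the angular average (viewed as a radial function on $B$) and $g := f - f_{\mathrm{rad}}$ has vanishing angular mean. Both functions lie in $W^{2,2}(B)$ because angular averaging is a bounded linear projection on $W^{2,2}(B)$.

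The key calculation uses that, in the orthonormal polar frame $(e_r,e_\theta)$, the Hessian of a radial $V$ is diagonal with entries $V''(r)$ and $V'(r)/r$, and likewise $\cof\D^2 V$ has the same eigenframe with the entries swapped. Hence for any $h\in W^{2,2}(B)$,
$$\D^2 V : \D^2 h = V''(r)\, h_{rr} + \frac{V'(r)}{r}\Bigl(\frac{h_{\theta\theta}}{r^2} + \frac{h_r}{r}\Bigr),$$
with an analogous formula for $\cof\D^2 V : \D^2 h$ obtained by swapping $V''$ and $V'/r$. Two consequences follow. First, averaging the identity $\cof\D^2 V : \D^2 f = 0$ in $\theta$ kills the $f_{\theta\theta}$-term by periodicity and commutes with the $r$-derivatives, yielding $\cof\D^2 V : \D^2 f_{\mathrm{rad}} = 0$ a.e.; the radial hypothesis then gives $\int_B \D^2 V : \D^2 f_{\mathrm{rad}} = 0$. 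Second, if $h\in W^{2,2}(B)$ has zero angular mean, the $\theta$-integrals of $h_{rr}$ and $h_r$ vanish (commuting the $r$-derivative through the $\theta$-integral) and the $\theta$-integral of $h_{\theta\theta}$ vanishes by periodicity, so integrating first in $\theta$ gives $\int_B \D^2 V : \D^2 h = 0$. Applied to $h = g$, this finishes the proof.

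The main technical point is justifying the interchange of $\theta$-integration with the weak $r$-derivatives for $W^{2,2}$ functions, together with showing that $f_{\mathrm{rad}}$ really is a bona fide radial element of $W^{2,2}(B)$ (the delicate point being regularity at the origin). Both issues are routine: one approximates $f$ by smooth $f_k\to f$ in $W^{2,2}(B)$, performs the computation for each $f_k$ (where Fubini and classical differentiation apply and the decomposition into radial part plus angular remainder respects the $W^{2,2}$ topology), and passes to the limit using the continuity of the bilinear form $(h_1,h_2)\mapsto \int_B \D^2 h_1 : \D^2 h_2$ on $W^{2,2}(B)$ and the boundedness of angular averaging on $W^{2,2}$.
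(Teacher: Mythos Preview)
Your argument is correct and follows essentially the same route as the paper: both reduce to the radial hypothesis by angular averaging, showing that the average $\overline f$ of an admissible test function is again admissible (the paper's equation \eqref{vks-2}) and that $\int_B \D^2 V:\D^2 f=\int_B \D^2 V:\D^2 \overline f$ (the paper's equation \eqref{vks-1}). The only difference is cosmetic: the paper packages the averaging abstractly via the $SO(2)$-action $\rho_\varphi^*F=e^{-i\varphi}(F\circ\rho_\varphi)e^{i\varphi}$ and the single identity $F:\D^2\overline f=\overline{F:\D^2 f}$ for rotation-invariant $F$, whereas you compute the same identities explicitly in polar coordinates; both then pass from smooth to $W^{2,2}$ by density.
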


\begin{proof}
For $\p\in [0, 2\pi]$ denote by $e^{i\p}\in SO(2)$ the counter-clockwise rotation by the angle 
$\p$ and set $\rho_{\p}(x) = e^{i\p}x$, and for $F : B\to \R^{2\times 2}_{\sym}$
set $\rho_{\p}^*F = e^{-i\p}(F\circ \rho_{\p})e^{i\p}$. Clearly 
$\D^2 (f\circ\rho_{\p}) = \rho_{\p}^*(\D^2 f)$ for any $f\in W^{2,2}(B)$.
\\
For arbitrary $f\in W^{2,2}(B)$ define $\overline f : B\to\R$ by setting
$$
(\overline f)(x) = \int_0^{2\pi} f(e^{i\p}x)\ \frac{d\p}{2\pi}.
$$
and for $F\in W^{2,2}(B, \R^{2\times 2}_{\sym})$ set
$$
\al{F} = \int_0^{2\pi} (\rho_{\p}^*F)\ \frac{d\p}{2\pi}. 
$$
For $f\in (C^2\cap W^{2,2})(B)$, it is easy to see that
\begin{equation}
\label{vks-0} 
\D^2 \overline f = \al{\D^2 f}\mbox{ almost everywhere on }B.
\end{equation} 
But
clearly $f_n\to f$ in $L^2(B)$ implies $\overline f_n\to \overline f$ in $L^2(B)$,
and similarly $F_n\to F$ in $L^2(B, \R^{2\times 2}_{\sym})$
implies $\al{F_n}\to\al{F}$ in $L^2(B, \R^{2\times 2}_{\sym})$.
Hence \eqref{vks-0} remains true for all $f\in W^{2,2}(B)$.
\\
We must show that, for $V$, $f\in W^{2,2}(B)$ with $V$ radially symmetric,
\begin{equation}
\label{vks-1}
\int_{B} \D^2 V : \D^2 f = \int_{B} \D^2 V : \D^2\overline f
\end{equation} 
and
\begin{equation}
\label{vks-2}
\cof\D^2 V : \D^2 \overline f = \overline{\cof \D^2 V : \D^2 f}.
\end{equation} 
In fact, take these equalities for granted, and assume that $V$ satisfies the hypotheses
of the lemma.
Let $f\in W^{2,2}(B)$ be a (possibly non-symmetric) solution of $\cof\D^2 V : \D^2 f = 0$.
Then \eqref{vks-2} implies that $\cof\D^2 V : \D^2 \overline f = 0$. Hence the hypotheses on $V$ imply 
that $\int_{B} \D^2 V:\D^2 \overline f = 0$. Hence the claim follows from \eqref{vks-1}.

To prove \eqref{vks-1} and \eqref{vks-2} let us show that
\begin{equation}
\label{vks-3}
F : \D^2\overline f = \overline{F : \D^2 f}\mbox{ for all }f\in W^{2,2}(B),
\end{equation} 
whenever $F\in L^2(B, \R^{2\times 2}_{\sym})$ satisfies $F = \rho_{\p}^*F$ for all $\p$.
\\
In fact, since $\rho_{\p}^*F = F$ we compute
$$
(F : \D^2 f)\circ\rho_{\p} = (\rho_{\p}^*F):(\rho_{\p}^*\D^2 f) = F:(\rho_{\p}^*\D^2 f).
$$
Integration over $\p$ yields $\overline{F : \D^2 f} = F : \al{\D^2 f}$, which by \eqref{vks-0}
is just \eqref{vks-3}.
\\
Equation \eqref{vks-2} is \eqref{vks-3} with $F = \cof\D^2 V$. Applying \eqref{vks-3}
with $F = \D^2 V$, integrating over $B$ and applying Fubini we find
$$
\int_{B} \D^2 V : \D^2\overline f 
= \int_0^{2\pi} \left( \int_{B} (\D^2 V : \D^2 f )(\rho_{\p}(x))\ dx \right)\ \frac{d\p}{2\pi}.
$$
Now \eqref{vks-1} follows by a change of variables, which renders the inner integral $\p$-independent.
\end{proof}

\begin{lemma}\label{le-radi} 
Let $V$, $f\in W^{2,2}(B)$ be radially symmetric, and assume that $\cof\D^2 V : \D^2 f = 0$
almost everywhere. Then $\D^2 V : \D^2 f = 0$ almost everywhere.
\end{lemma}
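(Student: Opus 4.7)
The plan is to exploit the fact that for a radially symmetric $V \in W^{2,2}(B)$, the Hessian $\D^2 V$ is diagonal in the (moving) orthonormal frame consisting of the radial and tangential unit vectors, with eigenvalues $v''(r)$ and $v'(r)/r$, where $V(x) = v(|x|)$. Writing also $f(x) = g(|x|)$, both $\D^2 V$ and $\D^2 f$ are simultaneously diagonalised in this frame, and the same is true of $\cof \D^2 V$ (whose eigenvalues are just $v'(r)/r$ and $v''(r)$, swapped).

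With this setup, a direct computation yields
$$
\D^2 V : \D^2 f = v''(r)\, g''(r) + \frac{v'(r)\, g'(r)}{r^2}, \qquad
\cof \D^2 V : \D^2 f = \frac{v'(r)}{r} g''(r) + v''(r) \frac{g'(r)}{r} = \frac{(v' g')'(r)}{r},
$$
the last equality being the product rule. So the hypothesis translates to $(v'g')' = 0$ as a distribution on $(0,1)$, which means $v'g'$ is a constant $C$ on $(0,1)$.

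The next step is to show $C = 0$. Since $V, f \in W^{2,2}(B)$ and the tangential eigenvalue contributes $\int_B |v'(|x|)/|x||^2\,dx = 2\pi \int_0^1 (v'(r))^2/r\, dr$ to the Hessian $L^2$-norm, both $\int_0^1 (v')^2/r\, dr$ and $\int_0^1 (g')^2/r\, dr$ are finite; Cauchy--Schwarz then gives $\int_0^1 |v'g'|/r\, dr < \infty$. If $v'g' \equiv C$ near $r=0$ then this forces $C=0$, hence $v'g' = 0$ a.e. on $(0,1)$.

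It remains to show $v''g'' = 0$ a.e. (which combined with $v'g' = 0$ shows $\D^2 V : \D^2 f = 0$ a.e.). Away from the origin, $v'$ and $g'$ are one-variable Sobolev functions, so by the classical fact that a Sobolev function has zero weak derivative a.e.\ on its zero set, $v'' = 0$ a.e.\ on $\{v' = 0\}$ and $g'' = 0$ a.e.\ on $\{g' = 0\}$. Since $v'g' = 0$ a.e. means these two sets cover $(0,1)$ up to a null set, $v''g'' = 0$ a.e., completing the argument. The only mildly delicate point is the vanishing of the constant $C$, but as sketched above this follows cleanly from the integrability built into the $W^{2,2}$ hypothesis.
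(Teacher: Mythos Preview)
Your proof is correct and follows essentially the same route as the paper's: reduce to the one-dimensional identities $\D^2 V : \D^2 f = v''g'' + v'g'/r^2$ and $\cof\D^2 V : \D^2 f = (v'g')'/r$, conclude that $v'g'$ is a constant which must vanish by the $W^{2,2}$ integrability at the origin, and then use the standard fact that a Sobolev function has vanishing derivative a.e.\ on its level sets to get $v''g'' = 0$. The only cosmetic difference is that the paper deduces $C=0$ by observing $v'(0)=0$ (from $\int_0^1 (v')^2/r\,dr<\infty$ and continuity of $v'$), whereas you argue directly that $\int_0^1 |v'g'|/r\,dr<\infty$ via Cauchy--Schwarz; these are the same integrability input phrased two ways.
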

\begin{proof}
Abusing notation we write $V(x) = V(|x|)$ and $f(x) = f(|x|)$. A short computation shows that
\begin{equation}
\label{cofrad} 
\cof\D^2 V(x) : \D^2 f(x) = V''(|x|) \frac{f'(|x|)}{|x|} + 
f''(|x|) \frac{V'(|x|)}{|x|}\mbox{ for a.e. }x\in B.
\end{equation} 
Hence $V'f'$ is constant on $(0, 1)$. Since $V\in W^{2,2}(B)$,
we clearly have $V'(0) = 0$ (this follows e.g. from the formula \eqref{same-4}).
Hence
\begin{equation}
\label{thevo-1}
V' f' = 0 \mbox{ almost everywhere on }(0, 1).
\end{equation} 
Hence $f' = 0$ almost everywhere on $\{V'\neq 0\}$. In particular,
$f'' = 0$ almost everywhere on $\{V'\neq 0\}$. On the other hand,
$V'' = 0$ almost everywhere on $\{V' = 0\}$. Hence
$
V'' f'' = 0 \mbox{ almost everywhere on }(0, 1).
$
So the claim follows from the equality
\begin{equation}
\label{tra-22} 
\D^2 V(x):\D^2 f(x) = V''(|x|) f''(|x|) + \frac{V'(|x|) f'(|x|)}{|x|^2} \mbox{ for a.e. }x\in B.
\end{equation} 
\end{proof}

\begin{proof}[Proof of Proposition \ref{main}]
By Lemma \ref{vks} it is enough to show that $\int_B \D^2 V : \D^2 f = 0$ for all radially
symmetric $f\in W^{2,2}(B)$ satisfying $\cof\D^2 V : \D^2 f = 0$. By Lemma \ref{le-radi}
this is indeed the case.
\end{proof}

\section{Further remarks}\label{Example} 

We begin with a trivial observation.

\begin{lemma}
\label{samele} 
Let $U$, $V\in W^{2,2}(B)$ be radially symmetric and write $U(x) = u(|x|)$ 
and $V(x) = v(|x|)$. Then we have
$$
\det\D^2 U = \det\D^2 V \mbox{ a.e. on }B\iff |u'| = |v'| \mbox{ a.e. on }(0, 1).
$$
\end{lemma}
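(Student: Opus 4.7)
The plan is to reduce the claim to a one-dimensional identity for the radial profiles and then run an antiderivative argument. Writing $U(x) = u(|x|)$ and $V(x) = v(|x|)$, a direct computation of the same type already used in \eqref{cofrad} and \eqref{tra-22} shows that $\D^2 U$ is diagonal in the radial/tangential frame with eigenvalues $u''(|x|)$ and $u'(|x|)/|x|$, so
$$
\det \D^2 U(x) = \frac{u'(|x|)\, u''(|x|)}{|x|} \quad \mbox{for a.e.\ } x \in B,
$$
and analogously for $V$. Hence the equality $\det\D^2 U = \det\D^2 V$ a.e.\ on $B$ is equivalent to $u' u'' = v' v''$ a.e.\ on $(0,1)$.

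For the forward direction I would write $u' u'' = \tfrac{1}{2}((u')^2)'$ in the weak sense. Since $U, V \in W^{2,2}(B)$, the eigenvalue formula above gives $u'', v'' \in L^2_{\mathrm{loc}}(0,1)$, so $u', v' \in W^{1,2}_{\mathrm{loc}}(0,1)$ and the Sobolev chain rule applies. Thus
$$
\bigl((u')^2 - (v')^2\bigr)' = 0 \quad \mbox{in } \mathcal{D}'(0,1),
$$
so $(u')^2 - (v')^2$ is a.e.\ equal to a constant on $(0,1)$. That constant must be $0$ because $u'(0) = v'(0) = 0$, the same boundary behaviour already invoked in the proof of Lemma \ref{le-radi}; equivalently, the integrability $\int_0^1 (u')^2 r^{-1}\,dr < \infty$ inherited from $U \in W^{2,2}(B)$ prevents $(u')^2$ from being bounded away from $0$ near the origin. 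Hence $|u'| = |v'|$ a.e.\ on $(0,1)$.

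The converse runs identically in reverse: if $|u'| = |v'|$ a.e.\ on $(0,1)$, then $(u')^2 = (v')^2$ as $W^{1,2}_{\mathrm{loc}}$ functions, so differentiating yields $2u' u'' = 2v' v''$ a.e., which translates back to $\det \D^2 U = \det \D^2 V$ a.e.\ on $B$. I do not anticipate any serious obstacle: the whole content of the lemma is this explicit radial computation plus the boundary condition $u'(0) = 0$, which is already used elsewhere in the note and which the author himself flags by calling the statement a trivial observation.
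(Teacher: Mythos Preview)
Your argument is correct and is essentially the same as the paper's: both compute $\det\D^2 V(x) = (2|x|)^{-1}\bigl((v')^2\bigr)'(|x|)$, observe that equality of determinants forces $(u')^2 - (v')^2$ to be constant, and kill the constant using $u'(0)=v'(0)=0$. The paper's proof is a two-line sketch; you have simply added the regularity bookkeeping (Sobolev chain rule, $W^{1,2}_{\mathrm{loc}}$ membership) and an alternative justification of the boundary value via the integrability of $(u')^2/r$, but the route is identical.
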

\begin{proof}
By \eqref{cofrad} we have
$
\det\D^2 V(x) = (2|x|)^{-1}((v')^2)'(|x|).
$
But to have $V\in W^{2,2}(B)$ we must have $v'(0) = 0$,
and the same is true for $u$.
\end{proof}

\begin{remark}\label{same}
If $U$, $V\in W^{2,2}(B)$ are radially symmetric and satisfy $\det\D^2 U = \det\D^2 V$
almost everywhere, then $|\D^2 U|^2 = |\D^2 V|^2$ almost everywhere.
In particular, $\W_k(U) = \W_k(V)$ for $k = \det\D^2 V$ (and indeed for any $k$).
\end{remark}
\begin{proof}
We write $V(x) = v(|x|)$ and $U(x) = u(|x|)$. By Lemma \ref{samele} we have
\begin{equation}
\label{same-1} 
(v')^2 = (u')^2 \mbox{ almost everywhere.}
\end{equation} 
Hence $v'' v' = u'' u'$ by the Leibniz rule, and in particular
$
(v'')^2 (v')^2 = (u'')^2 (u')^2.
$
Hence
$$
(v'')^2 = (u'')^2 \mbox{ almost everywhere on }\{v'\neq 0\} = \{u' \neq 0\};
$$
the (almost everywhere) equality of the sets follows from \eqref{same-1}.
But almost everywhere on $\{v' = 0\}$ we have $v'' = 0$, and similarly for $u$.
Summarising, we have
$
|v'| = |u'|$
and 
$|v''| = |u''|$ almost everywhere.
The claim follows by combining this with the formula
\begin{equation}
\label{same-4} 
|\D^2 V(x)|^2 
= (v''(|x|))^2 + |x|^{-2}(v'(|x|))^2,
\end{equation}
which follows by taking $V = f = v$ in \eqref{tra-22}.
\end{proof}

\subsection*{An extreme example}

For a given radially symmetric $V$, Lemma \ref{samele} provides a characterisation of 
all radially symmetric
solutions $U$ of $\det\D^2 U = \det\D^2 V$. Obviously, for generic $v$ 
there can be multiple solutions
to the equation $|u'| = |v'|$. We only give one class of examples; it is easy to construct
many others. 
\\
Let $\eta\in C^{\infty}(\R)$ be nonnegative and supported in 
$(-\frac{1}{2}, \frac{1}{2})$ (but not identically zero).
Let $R\in (0, 1]$ and let
$(t_n)_{n = 1}^{\infty}\subset (0, R)$ be a strictly increasing sequence with 
$t_n\uparrow R$. Set $t_0 = 0$ and define $v:[0,1) \to \R$ by setting
$$
v(t) = \sum_{n = 0}^{\infty} (t_{n+1} - t_n)^n\ \eta\left( \frac{2t - t_n - t_{n+1}}{2|t_{n+1} - t_n|}\right).
$$
Since $v\in W^{2,\infty}(0, 1)$ vanishes near zero, we see that $V(x) = v(|x|)$
satisfies $V\in W^{2,\infty}(B)$; in fact we have $V\in C^{\infty}(\overline B)$, 
because $v\in C^{\infty}([0, 1])$.
For each $n = 1, 2, 3, ...$ define $u_n : [0, 1)\to\R$ by
$$
u_n(t) = 
\begin{cases}
-v(t) &\mbox{ if }t\in (t_n, t_{n+1})
\\
v(t) &\mbox{ otherwise,}
\end{cases}
$$
and define $U_n\in W^{2,\infty}(B)$ by setting $U_n(x) = u_n(|x|)$.
Since $|u_n'| = |v'|$ almost everywhere, Lemma \ref{samele} shows that $\det\D^2 U_n = \det\D^2 V$.
Clearly, the $U_n$ are pairwise distinct and radially symmetric. So 
by Proposition \ref{main} the functional
$\W_k$ with $k = \det\D^2 V$ admits infinitely many stationary points.

\def\cprime{$'$}

\end{document}